\documentclass[graybox]{SNmult}

\usepackage{type1cm}        
\usepackage{makeidx}         
\usepackage{graphicx}        
\usepackage{multicol}        
\usepackage[bottom]{footmisc}

\usepackage{newtxtext}       %
\usepackage[varvw]{newtxmath}       

\makeindex             
\usepackage{booktabs}
\usepackage{siunitx}

\usepackage{amsthm}
\usepackage{amsfonts}
\usepackage{xcolor}
\usepackage{float}
\usepackage{stmaryrd} 
\usepackage{relsize}
\usepackage{setspace}
\usepackage{caption}
\usepackage{svg}
\usepackage{subfig}
\usepackage{bm}
\usepackage{trajan}
\usepackage{algorithm}
\usepackage{algpseudocode}
\usepackage{url}
\newcommand{\myv}[0]{\boldsymbol{v}}
\newcommand{\myu}[0]{\boldsymbol{u}}
\newcommand{\myf}[0]{\boldsymbol{f}}

\newcommand{\myF}[0]{\boldsymbol{F}}

\newcommand{\myx}[0]{\boldsymbol{x}}

\newcommand{\myw}[0]{\boldsymbol{w}}
\renewcommand{\hat}[1]{\widehat{#1}} 
 
\newcommand{\pd}[2]{\frac{\partial#1}{\partial#2}}

\newcommand{\LRp}[1]{\left( #1 \right)} 

\newcommand{\mylcurly}[0]{\{\!\!\{}
\newcommand{\myrcurly}[0]{\}\!\!\}}

\begin{document}
\title*{Entropy correction artificial viscosity for high order DG methods using multiple artificial viscosities}
\author{Raymond Park\orcidID{0009-0003-8097-6109} and\\ Jesse Chan\orcidID{0000-0003-2077-3636}}
\institute{Raymond Park \at Oden Institute for Computational Engineering and Sciences, The University of Texas at Austin Address of Institute, Austin, Texas \\
\email{rjp2498@my.utexas.edu}
\and Jesse Chan \at Oden Institute for Computational Engineering and Sciences, The University of Texas at Austin, Austin, Texas \at Department of Aerospace Engineering and Engineering Mechanics, The University of Texas at Austin, Austin, Texas \\
\email{jesse.chan@oden.utexas.edu}}
%
%
\maketitle
\abstract{Entropy stable discontinuous Galerkin (DG) methods display improved robustness
for problems with shocks, turbulence, and under-resolved features by enforcing
an entropy inequality. Such methods have traditionally relied on entropy conservative (EC) fluxes that are computationally expensive to evaluate. An alternative approach for enforcing an entropy inequality is through a minimally dissipative ``entropy correction" artificial viscosity. We review how to construct such an artificial viscosity formulation and extend this approach to multiple types of viscosity (e.g., viscosity and thermal
diffusivity). We determine simple analytical expressions for optimal viscosity parameters. We compare this to the case of a single monolithic viscosity parameter for different 1D and 2D problems, and
show that the proposed method allows users to more precisely target specific physical
phenomena while retaining robustness for general problem settings.
\keywords{Discontinuous Galerkin $\cdot$ Entropy stability $\cdot$ Artificial viscosity $\cdot$ High order}}

\section{Introduction}
\label{sec:1}
In the field of computational fluid dynamics (CFD), there has been increasing need for accurate numerical simulations of fluid flow for a variety of application. Finite volume methods (FVM) and finite element methods (FEM) have traditionally served as the main workhorse methods for CFD \cite{ZJWang}. However, certain modern engineering simulations increasingly demand high-fidelity solutions with accuracy levels that are computationally infeasible for lower (first or second order) methods. High order discontinuous Galerkin (DG) methods borrow ideas from FVM and FEM to retain geometric flexibility while achieving arbitrary high order accuracy and are commonly used to approximate solutions to time-dependent nonlinear conservation laws.  While high order methods are efficient and accurate by introducing small dissipation, they still face major challenges related to entropy stability when approximating sharp gradients and under-resolved solution features.

Entropy-stable discontinuous Galerkin methods improve the robustness of high order DG methods without introduction of heuristic parameters \cite{carpenter2014entropy, chen2017entropy, gassner2016split}. They are obtained by combining the theory of entropy stability with classical discontinuous Galerkin. These formulations employ collocated Gauss–Lobatto quadrature nodes, which yield mass and stiffness matrices satisfying the summation-by-parts (SBP) property. Within the flux-differencing framework, the volume integral is then rewritten as an equivalent finite-volume–type differencing formulation using SBP operators together with two-point numerical fluxes that are consistent, symmetric, and entropy conservative. 

Recently, entropy correction artificial viscosity (ECAV) methods were introduced in \cite{chan_av, christner2025entropystablefinitedifference, christner2025entropystablenodaldiscontinuous} as an alternative to flux differencing approaches for enforcing an entropy inequality. These approaches are different from traditional artificial viscosity approach in that the goal is to enforce an entropy inequality as opposed to perform shock capturing. The ECAV coefficient can be computed locally per element basis by computing a local entropy violation. ECAV coefficients are small in magnitude and typically scale quadratically with the approximation error \cite{vanfleet2026viscousdiscretization2026}, which does not significantly restrict the time-step condition.

In this work, we present an extension to the ECAV framework to artificial viscosities with multiple diffusive terms whose scaling parameters we determine in an optimal fashion. This has the potential to tailor an entropy stable artificial viscosity to specific classes of physical phenomena (i.e., overheating, turbulence, shocks, etc) in order to minimize the loss of accuracy and preserve resolution of fine-scale features. By utilizing different viscosity models that target specific physical phenomena (i.e., regions with high temperature gradients or under-resolved turbulence), the proposed method can be tailored to specific classes of problems without introducing heuristic stabilization parameters. 

The paper is organized as follows. Section \ref{sec:nonlinear/entropy} reviews nonlinear conservation laws and various formulations of entropy inequalities, and Section \ref{sec:high_order_dg} reviews a standard DG weak formulation for nonlinear conservation laws and presents a standard viscous discretization to compute the artificial viscosity coefficient for a monolithic viscosity model. In Section \ref{:sec:multiple_av}, we formulate an optimization problem to combine multiple artificial viscosity models and briefly describe the Navier–Stokes diffusion model introduced by Svärd in \cite{svard2024convergent}, as well as a spectral vanishing viscosity (SVV) introduced by Tadmor \cite{tadmor1989convergence}. We conclude with numerical experiments to demonstrate high order accuracy and robustness of the proposed approach.

\section{Nonlinear conservation laws and entropy inequalities}
\label{sec:nonlinear/entropy}
In this paper, our main focus is to numerically approximate the following system of nonlinear equations in $d$ spatial dimensions
\begin{align}
    \frac{\partial \boldsymbol{u}}{\partial t} + \sum_{i = 1}^d \frac{\partial 
    \bm{f}_i(\boldsymbol{u})}{\partial \myx_i} = \boldsymbol{0}.
    \label{conserr_law}
\end{align}
$\boldsymbol{f}_m : \mathbb{R}^n \mapsto \mathbb{R}^n, \boldsymbol{u}(x, t) \in \mathbb{R}^n$ denotes flux functions, and conservative variables, respectively. We assume that $\eqref{conserr_law}$ admits one or more entropy inequalities of the form 
\begin{align}
    \frac{\partial S(\myu)}{\partial t} + \sum_{m = 1}^d \frac{\partial F_m(\myu)}{\partial x_m} \le 0,
    \label{entropy_inequality}
\end{align}
where $S(\myu)$ is a scalar convex entropy function. 
Furthermore, entropy flux can be written as the following
\begin{align}
     \quad F_m(\myu) = \myv(\myu)^T \bm{f}_m(\myu) - \psi_m(\myu),
\end{align}
where $F_m(\myu)$ are the associated entropy fluxes, $\psi_m$ are the entropy potentials, and $\myv(\myu)$ are the entropy variables defined as 
\begin{align}
    \myv(\myu) = \frac{\partial S}{\partial \myu}.
    \label{entropy_flux_relation}
\end{align}
The convexity of entropy function necessitates that the mapping between conservative and entropy variables is invertible. 
Sufficiently smooth numerical solution will satisfy the equality version of $\eqref{entropy_inequality}$ by multiplying \eqref{conserr_law} with entropy variables, using the chain rule, and the following compatibility condition for entropy and entropy fluxes 
\begin{align}
    \bm{v}^T \frac{\partial \bm{f}_m}{\partial \myu} = \frac{\partial F_m}{\partial \myu}, \quad m = 1, ... , d.
\end{align}
In this work, we utilize an integrated cell version of \eqref{entropy_inequality} by considering a closed domain $D \subset \mathbb{R}^d$. We integrate \eqref{entropy_inequality} over cell $D$, and using identity $\eqref{entropy_flux_relation}$ and divergence theorem yields 
\begin{align}
    \int_D \frac{\partial S(\myu)}{\partial t} + \int_{\partial D} \sum_{m = 1}^d \big(\myv^T\myf_m(\myu) - \psi_m(\myu)\big) n_m \le 0,
    \label{cell_entropy_inequality}
\end{align}
where $n_m$ represents the outward normal. Rather than satisfying $\eqref{cell_entropy_inequality}$, we will enforce an intermediate cell entropy inequality below:
\begin{align}
    \sum_{m = 1}^d\bigg[ \int _D -\frac{\myv(\myu)^T}{\partial x_m} + \int_{\partial D} \psi_m(\myu) n_m \bigg] \ge 0.
    \label{final_entropy_inequality}
\end{align}
In this work, we rely on various artificial viscosity models to satisfy a discrete version of $\eqref{final_entropy_inequality}$. For sufficiently smooth $\myu$, and convective flux $\bm{f}_m(\myu)$, $\eqref{final_entropy_inequality}$ is already satisfied.
\section{High order discontinuous Galerkin formulations and viscosity discretizations}
\label{sec:high_order_dg}
\subsection{High order DG formulation}
We assume that the domain $\Omega \subset \mathbb{R}^d$ is triangulated by non-overlapping elements $D^k$, where each element $D^k$ is the image of a reference element $\hat{D}$ under some affine mapping $\phi^k: \hat{D} \rightarrow D^k$. Let $\bm{n}$ denote the outward normal vector on each face of $D^k$. We denote $(u, v)_{D^k}, \langle u, v \rangle_{\partial D^k}$ as the $L^2$ inner products on element $D^k$ and the corresponding surface $\partial D^k$ 
\begin{align*}
    (u, v)_{D^k} \approx \int_{D^k} u(\myx)v(\myx)\, dx, \quad 
        \langle u, v \rangle_{\partial D^k} \approx \int_{\partial D^k} u(\myx) v(\myx)\, dx,
\end{align*}
which can be approximated by using quadrature. The standard weak DG formulation of \eqref{conserr_law} for an approximate solution $\myu_h$ on a single element $D^k$ is the following 
\begin{align}
     \bigg(\myw, \frac{\partial \myu_h}{\partial t}\bigg)_{D^k} + \sum_{i =1}^d \bigg(-\myf_i(\myu_h), \frac{\partial \myw}{\partial x_i} \bigg)_{D^k} + \langle \myf_*, \myw \rangle_{\partial D^k} = \boldsymbol{0}, \quad \bm{w} \in [P^N(D^k)]^n,
     \label{DG}
\end{align}
where $P^N(D^k)$ denotes the space of degree $N$ polynomials on $D^k$. \eqref{DG} is derived by multiplying \eqref{conserr_law} by a test function $\bm{w}$, integrating by parts, and introducing numerical flux $\bm{f}^*_n$ as the interface surface flux. Both volume and surface integrals are approximated using quadrature. The global DG formulation is derived by summing up over all elements $D^k$.

In this work, we utilize the DG spectral element method (DGSEM) which uses Legendre-Gauss-Lobotto (LGL) quadrature points that are collocated with interpolation points. It can be shown that, under collocation at LGL nodal points, the mass matrix and differentiation matrix satisfy the summation-by-parts (SBP) property, which is a discrete analogue of integration by parts \cite{gassner2013skew}.
\subsection{Entropy correction artificial viscosity}
In this section, we briefly review our viscous discretization, and how to compute artificial viscosity coefficient to enforce the intermediate entropy \eqref{final_entropy_inequality}.  We consider the modified nonlinear conservation law by introducing the following Laplacian type artificial viscosity term to \eqref{conserr_law}
\begin{align}
     \frac{\partial \boldsymbol{u}}{\partial t} + \sum_{i = 1}^d \frac{\partial \bm{f}_i(\boldsymbol{u})}{\partial x_i} = \sum_{i, j = 1}^d\frac{\partial}{\partial x_i} \bigg(\epsilon_k(\myu) \frac{\partial \myu}{\partial x_j} \bigg), 
     \label{eq:conser_law_av}
\end{align}
where $\epsilon_k(\myu) \ge 0$ is the artificial viscosity coefficient to be added to enforce the entropy inequality. We assume that $\epsilon_k(\myu)$ is constant over element $D^k$. A symmetric form of \eqref{eq:conser_law_av} can be derived using the chain rule, i.e., $\frac{\partial \myu}{\partial x} = \frac{\partial \myu}{\partial \myv} \frac{\partial \myv}{\partial x}$, where we write the artificial viscosity in terms of the entropy variables:
\begin{align}
     \frac{\partial \boldsymbol{u}}{\partial t} + \sum_{i = 1}^d \frac{\partial \myf_i(\boldsymbol{u})}{\partial x_i} = \sum_{i = 1}^d\frac{\partial}{\partial x_i} \bigg(\epsilon_k(\myu) \frac{\partial \myu}{\partial \myv}\frac{\partial \myv}{\partial x_j} \bigg).
     \label{AV}
\end{align}
A more general viscous regularization term will be analyzed
\begin{equation}
\pd{\bm{u}}{t} + \sum_{i=1}^d \pd{\bm{f}_i(\bm{u})}{x_i} = \sum_{i,j=1}^d \pd{}{x_i}\LRp{\epsilon_k(\bm{u})\bm{K}_{ij}\pd{\bm{v}}{x_j}}.
\label{eq:av}
\end{equation}
Here, $\bm{K}_{ij}$ denote blocks of a symmetric and positive semi-definite matrix $\bm{K}$
\begin{equation}
\bm{K} = \begin{bmatrix}
\bm{K}_{11} & \ldots & \bm{K}_{1d}\\
\vdots & \ddots & \vdots\\
\bm{K}_{d1} & \ldots & \bm{K}_{dd}
\end{bmatrix} = \bm{K}^T, \qquad \bm{K} \succeq 0.
\label{eq:K}
\end{equation}
Note that taking $\bm{K}_{ij} = \delta_{ij} \pd{\bm{u}}{\bm{v}}$ recovers the conservation law with Laplacian-based artificial viscosity \eqref{AV}. 

For the $d$-dimensional compressible Euler equations governed by the ideal gas law, explicit expression of the Jacobian $\frac{\partial \myu}{\partial \myv}$ exist. For example, for the 2 dimensional case, such expressions can be found in \cite{barth1999numerical, chan_av, godlewski2013numerical}.

\subsection{DG formulation of monolithic ECAV model}
Denote the viscous term as $\boldsymbol{g_\text{visc}}$, and $\myu_h$ as the approximate solution. Consider the weak form of \eqref{AV}, 
\begin{align}
     \bigg(\myw, \frac{\partial \myu_h}{\partial t}\bigg)_{D^k} + \sum_{i =1}^d \bigg(-\myf_i(\myu_h), \frac{\partial \myw}{\partial x_i} \bigg)_{D^k} + \langle \myf^*_n, \myw \rangle_{\partial D^k} = (\boldsymbol{g_\text{visc}}, \myw)_{D^k}, \quad \myw \in [P^N(D^k)]^n
     \label{eq:DG_AV}
\end{align}
where $\bm{f}^*_n$ is the surface flux. For example, $\bm{g}_\text{visc} \approx \epsilon_k \Delta \myu$ if the viscous term is the Laplacian-based artificial viscosity where $\epsilon_k$ is the viscosity coefficient. The viscous term will be computed using a BR-1 type discretization \cite{bassi1997high}
\begin{align}
    (\bm{\Theta_i}, \myw_{1, i})_{D^k} = \bigg(\frac{\partial \bm{v}_h}{\partial x_i}, \bm{w}_{1, i} \bigg)_{D^k} + \frac{1}{2} \langle \llbracket \myv_h \rrbracket n_i, \myw_{1, i} \rangle_{\partial D^k}, \quad \forall \myw_{1, i} \in [P^N(D^k)]^n 
    \label{DGofv} \\ 
    (\boldsymbol{\sigma}_i, \myw_{2, i})_{D^k} = \Bigg( \sum_{j = 1}^d \epsilon_k(\myu_h) \boldsymbol{K}_{ij} \bm{\Theta}_j, \myw_{2, i}\Bigg)_{D^k}, \quad \forall \myw_{2, i} \in [P^N(D^k)]^n 
    \label{L2oftheta_sigma}
    \\
    (\boldsymbol{g_\text{visc}}, \myw_3)_{D^k} = \sum_{i =1}^d \bigg[ \bigg(-\boldsymbol{\sigma}_i, \frac{\partial \myw_3}{\partial x_i}\bigg) + \langle \mylcurly \boldsymbol{\sigma}_i \myrcurly n_i, \myw_3 \rangle_{\partial D^k} \bigg], \quad \forall \myw_3 \in [P^N(D^k)]^n,
    \label{viscous_calc}
\end{align}
where $n_i$ is the $i$th component of the outward normal vector of element $D^k$ in the $i$th Cartesian direction, and $\bm{v}_h$ is an approximation to $\bm{v}(\myu_h)$ which we will describe in more detail in the following paragraph. We introduce the jumps and average notations
\begin{align*}
    \llbracket u \rrbracket=  u^+ - u^-, \quad \mylcurly u \myrcurly = \frac{1}{2} (u^+ + u^-).
\end{align*}
Here, $u^+, u^-$ represent exterior and interior values of $u$ on a given element $D^k$. Equation \eqref{DGofv} calculates $\bm{\Theta}_i$ which represents the DG derivative with respect to $x_i$ (the $i$th coordinate direction) of the entropy variables $\myv(\myu)$. Equation \eqref{viscous_calc} computes the divergence of the viscous fluxes calculated from the above using integration by parts twice, and DG approximation. Equation \eqref{L2oftheta_sigma} computes the $L^2$ projection of the viscous flux $\sum_{i =1}^d \boldsymbol{K}_{ij} \bm{\Theta}_j$ onto the space of degree $N$ polynomials $P^N$. 

We note that, to ensure $\bm{g}_{\text{visc}}$ is discretely entropy dissipative, $\bm{v}_h$ must be taken to be the $L^2$ projection of the entropy variables. For the nodal collocation schemes used in the numerical experiments of this work, the calculation of $\bm{v}_h$ reduce to the interpolant of $\bm{v}(\myu_h)$ at collocation points. More details on this can be found in $\cite{chan_av, chan2018discretely}$.
\begin{lemma}
    Let $\epsilon_k(\myu_h) \ge 0$, and $\bm{g}_\text{visc}$ be given by \eqref{DGofv}, \eqref{L2oftheta_sigma}, \eqref{viscous_calc}. Then, for a periodic domain, 
    \begin{align}
        \sum_k -(\bm{g}_\text{visc}, \myv_h)_{D^k} = \sum_k \sum_{i, j = 1}^d (\epsilon_k(\myu_h)\bm{K}_{ij} \bm{\Theta}_j, \bm{\Theta}_i)_{D^k} \ge 0
    \end{align}
\label{lemma:positive definite of av}
\end{lemma}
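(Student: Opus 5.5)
The plan is to test the three relations \eqref{DGofv}, \eqref{L2oftheta_sigma}, \eqref{viscous_calc} with well-chosen functions, sum over elements, and show that all interface terms cancel on a periodic mesh. Only two facts are used. First, $\myv_h$ and the $\bm{\Theta}_i$ lie in $[P^N(D^k)]^n$, so they are admissible test functions. Second, the (quadrature) inner products satisfy the SBP/integration by parts identity $(\partial_{x_i} \bm{a}, \bm{b})_{D^k} = -(\bm{a}, \partial_{x_i}\bm{b})_{D^k} + \langle \bm{a} n_i, \bm{b}\rangle_{\partial D^k}$ for polynomials. For LGL collocation this holds discretely, as recalled above.

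The first step is to take $\myw_3 = \myv_h$ in \eqref{viscous_calc}, which gives
\begin{align*}
-(\bm{g}_\text{visc}, \myv_h)_{D^k} = \sum_{i=1}^d \left[ \left(\bm{\sigma}_i, \frac{\partial \myv_h}{\partial x_i}\right)_{D^k} - \langle \mylcurly \bm{\sigma}_i\myrcurly n_i, \myv_h\rangle_{\partial D^k}\right].
\end{align*}
Next, take $\myw_{1,i} = \bm{\sigma}_i$ in \eqref{DGofv}. Here $\bm{\sigma}_i \in [P^N]^n$ because it is defined by the projection \eqref{L2oftheta_sigma}. This yields $(\bm{\sigma}_i, \partial_{x_i}\myv_h)_{D^k} = (\bm{\Theta}_i,\bm{\sigma}_i)_{D^k} - \frac12\langle \llbracket \myv_h\rrbracket n_i, \bm{\sigma}_i\rangle_{\partial D^k}$. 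Then take $\myw_{2,i} = \bm{\Theta}_i$ in \eqref{L2oftheta_sigma}, which gives $(\bm{\sigma}_i,\bm{\Theta}_i)_{D^k} = \sum_j(\epsilon_k \bm{K}_{ij}\bm{\Theta}_j, \bm{\Theta}_i)_{D^k}$. Combining these, the volume part is exactly the right-hand side of the claimed identity, and on each element the remaining surface terms are
\begin{align*}
-\sum_{i=1}^d \left[\tfrac12 \langle \llbracket \myv_h\rrbracket n_i, \bm{\sigma}_i\rangle_{\partial D^k} + \langle \mylcurly \bm{\sigma}_i\myrcurly n_i, \myv_h\rangle_{\partial D^k}\right].
\end{align*}

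The main obstacle is showing that these surface terms sum to zero over all elements. To do this, group the contributions face by face. On an interior face shared by elements with traces $(\cdot)^-$ and $(\cdot)^+$, the normals are opposite. Both $\llbracket\cdot\rrbracket n_i$ and $\mylcurly\cdot\myrcurly$ are then single-valued, up to the sign carried by $n_i$. Writing $n_i$ for the normal of the minus side, the face contribution is
\begin{align*}
-\tfrac12\llbracket \myv_h\rrbracket n_i \cdot(\bm{\sigma}_i^- - \bm{\sigma}_i^+)...
\end{align*}
More carefully, it is
\begin{align*}
-\left[\tfrac12 (\myv^+-\myv^-)\cdot(\bm{\sigma}^- + \bm{\sigma}^+) + \mylcurly\bm{\sigma}\myrcurly\cdot(\myv^- - \myv^+)\right] n_i,
\end{align*}
where the contribution from the other element has been absorbed by flipping both the sign of the jump and the sign of $n_i$. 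This equals $-[\mylcurly\bm{\sigma}\myrcurly\cdot\llbracket\myv\rrbracket - \mylcurly\bm{\sigma}\myrcurly\cdot\llbracket\myv\rrbracket]n_i = 0$. The sign bookkeeping here is where I expect to spend the care. Periodicity ensures that every face is interior, so no boundary terms remain. The face quadrature must be the same from both sides, which holds for conforming LGL meshes.

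Finally, nonnegativity follows because $\epsilon_k \ge 0$ is constant on each element. At each quadrature node, $\sum_{i,j}\bm{\Theta}_i^T\bm{K}_{ij}\bm{\Theta}_j = \bm{\Theta}^T\bm{K}\bm{\Theta} \ge 0$ by \eqref{eq:K}, where $\bm{\Theta} = (\bm{\Theta}_1,\dots,\bm{\Theta}_d)$ is stacked. The quadrature weights are positive, so each elemental sum is nonnegative.
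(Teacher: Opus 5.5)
Your argument is correct and is the same one the paper relies on: it defers to Lemma~3.1 of \cite{chan2022entropy}, and its later remarks use the same test-function choices $\myw_3=\myv_h$, $\myw_{1,i}=\bm{\sigma}_i$, $\myw_{2,i}=\bm{\Theta}_i$, followed by face-by-face cancellation of the interface terms under periodicity and nonnegativity from the pointwise positive semidefiniteness of $\bm{K}$. As minor cleanup, the SBP identity you list is never used, since the derivative already falls on $\myv_h$ in both \eqref{DGofv} and \eqref{viscous_calc}; also delete the aborted first face display.
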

The proof can be found in \cite{chan2022entropy} under Lemma 3.1. Lemma \ref{lemma:positive definite of av} indicates that, using a BR1-type discretization, Laplacian artificial viscosity is provably dissipative provided that the viscosity coefficient is nonnegative. Next, we define the entropy volume residual as:
\begin{align}
    \delta_k(\myu_h) = \sum_{m = 1}^d \int_{D^k} -\bigg(\frac{\partial \myv_h}{\partial x_m}\bigg)^T \myf_m(\myu_h) + \int_{\partial D^k}  \psi_i(\tilde{\myu}_h) n_m,
    \label{entropy_volume_residual}
\end{align}
where $\tilde{\bm{u}}_h$ is the conservative variable mapped from the projected entropy variables. Comparing this $\delta_k(\myu_h)$ with \eqref{final_entropy_inequality}, we see that $\delta_k(\myu_h)$ must be non-negative to satisfy the entropy inequality. We compute the minimum $\epsilon_k(\myu_h)$ such that the following lemma holds.
\begin{lemma}
    \label{lemma:eps}
    Let $\epsilon_k(\myu_h)$ on $D^k$ satisfy 
    \begin{align*}
        \sum_{i, j = 1}^d \epsilon_k(\myu_h)(\boldsymbol{K}_{ij}\boldsymbol{\Theta}_j, \boldsymbol{\Theta}_i)_{D^k} \ge -\min(0, \delta_k(\myu_h)),
    \end{align*}
then the solution to \eqref{eq:DG_AV} $\myu_h$ satisfies the global entropy inequality
\begin{align*}
    \sum_k \bigg[\bigg(\frac{\partial S (\myu_h)}{\partial t}, 1 \bigg)_{D^k} + \bigg \langle \myv_h^T \myf_n^* - \sum_{i = 1}^d \psi_i(\tilde{\myu}_h)n_i, 1 \bigg \rangle_{\partial D^k}  \bigg] \le 0.
\end{align*}
\label{lemma:entropy stable av}
\end{lemma}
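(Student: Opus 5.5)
The plan is to test the semi-discrete formulation \eqref{eq:DG_AV} with the projected entropy variables, $\myw = \myv_h$. On each element the time derivative term becomes the entropy rate, the convective volume term becomes $-\delta_k(\myu_h)$ plus boundary entropy-potential terms, and the right-hand side is controlled from below by Lemma~\ref{lemma:positive definite of av}. Summing over elements, the choice of $\epsilon_k$ in the hypothesis makes every local contribution of the right sign.

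\textbf{Time derivative.} For the nodal collocation scheme $\myv_h$ is the interpolant of $\myv(\myu_h)$ at the LGL nodes. The quadrature then gives
\[
\bigl(\myv_h, \partial_t \myu_h\bigr)_{D^k} = \bigl(\myv(\myu_h)^T \partial_t \myu_h, 1\bigr)_{D^k} = \bigl(\partial_t S(\myu_h), 1\bigr)_{D^k}
\]
by the chain rule at each node. In general the same identity follows from the $L^2$-projection property of $\myv_h$.

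\textbf{Convective volume term.} With $\myw = \myv_h$ this term is $\sum_m \bigl(-\myf_m(\myu_h), \partial_{x_m}\myv_h\bigr)_{D^k}$. Adding and subtracting $\langle \psi_m(\tilde{\myu}_h) n_m, 1\rangle_{\partial D^k}$ and using the definition \eqref{entropy_volume_residual} gives
\[
\sum_m \bigl(-\myf_m, \partial_{x_m}\myv_h\bigr)_{D^k} = \delta_k(\myu_h) - \sum_m \langle \psi_m(\tilde{\myu}_h) n_m, 1\rangle_{\partial D^k}.
\]
Together with the surface term $\langle \myf^*_n, \myv_h\rangle_{\partial D^k}$, each element yields the identity
\[
\bigl(\partial_t S,1\bigr)_{D^k} + \Bigl\langle \myv_h^T\myf^*_n - \sum_i \psi_i(\tilde{\myu}_h) n_i, 1\Bigr\rangle_{\partial D^k} = -\delta_k(\myu_h) + (\bm{g}_{\text{visc}}, \myv_h)_{D^k}.
\]

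\textbf{Viscous term.} Summing over $k$, Lemma~\ref{lemma:positive definite of av} gives
\[
\sum_k (\bm{g}_{\text{visc}}, \myv_h)_{D^k} = -\sum_k \sum_{i,j} \epsilon_k (\bm{K}_{ij}\bm{\Theta}_j, \bm{\Theta}_i)_{D^k}.
\]
The hypothesis bounds each of these local dissipation terms below by $-\min(0,\delta_k)$. Hence each element contributes
\[
-\delta_k - \sum_{i,j}\epsilon_k(\bm{K}_{ij}\bm{\Theta}_j,\bm{\Theta}_i)_{D^k} \le -\delta_k + \min(0,\delta_k) = -\max(0,\delta_k) \le 0.
\]
Summing over elements gives the stated global inequality.

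\textbf{Main obstacle.} The delicate point is the viscous term. Lemma~\ref{lemma:positive definite of av} is only a global identity: the interface terms of the BR-1 discretization cancel only after summation over all elements on the periodic domain. So the argument must sum over $k$ before substituting the dissipation identity, and only then apply the elementwise bound. I will also check that the test function $\myv_h$ lies in $[P^N(D^k)]^n$, and that using $\tilde{\myu}_h$ in $\psi$ matches the definition of $\delta_k$, so that the add-and-subtract step is exact.
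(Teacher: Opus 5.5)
Your proposal is correct and follows essentially the same route as the proof the paper defers to (Lemma 2 of \cite{chan_av}). There, too, one tests \eqref{eq:DG_AV} with $\myv_h$ and identifies the convective volume term with $-\delta_k(\myu_h)$ minus the entropy-potential boundary terms; Lemma~\ref{lemma:positive definite of av} is applied only after summing over elements, and the hypothesis then bounds each element's contribution by $-\max(0,\delta_k)\le 0$, exactly as in your argument.
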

The proof can be found in \cite{chan_av} under Lemma 2. For the monolithic type viscosity model, artificial viscosity coefficient can be computed by solving $\epsilon_k(\myu_h)$ from Lemma \ref{lemma:eps}. By assuming that $\epsilon_k(\bm{u}_h)$ is constant over each element, the minimum value of $\epsilon_k(\myu_h)$ to enforce the entropy inequality \eqref{final_entropy_inequality} is defined as follows:
\begin{align}
    \epsilon_k(\myu_h) \ge \frac{-\min(0, \sigma_k(\myu_h))}{\sum_{i, j=1}^d (\bm{K}_{ij}\bm{\Theta}_j, \bm{\Theta}_i)}
    \label{eq:coef_form}
\end{align}
The denominator of the right hand side of $\eqref{eq:coef_form}$ can be arbitrarily close to $0$. Therefore, we use a regularized ratio by computing $\frac{a}{b} \approx \frac{ab}{\delta + b^2}$ using a small tolerance $\delta = 10^{-14}$ to compute $\epsilon_k(\myu_h)$. 

Different discretizations of the viscous term can yield different theoretical and numerical behavior. For example, it was shown in \cite{vanfleet2026viscousdiscretization2026} that the local discontinuous Galerkin (LDG) discretization admits an $O(h)$ upper bound on the artificial viscosity coefficient, which does not adversely impact the time-step constraint. However, in practice, we do not typically observe a significant difference between LDG and BR1 when using an upwind-like interface flux for the convective term \cite{vanfleet2026viscousdiscretization2026}.

\section{Multiple artificial viscosities approach}
We now present the main contribution of this paper: incorporation of different viscosity models in an optimal parameter free fashion. 
\label{:sec:multiple_av}
\subsection{DG Formulation of multiple AV models}
Let $M$ be the total number of viscous terms added to the conservation law \eqref{conserr_law}
\begin{align}
     \frac{\partial \boldsymbol{u}}{\partial t} + \sum_{i = 1}^d \frac{\partial \bm{f}_i(\boldsymbol{u})}{\partial x_i} = \sum_{m=1}^M \bm{g}_{\text{visc}}^m
     \label{multiple_av_conser_law},
\end{align}
where $m = 1, ...,M$ indexes the viscosity model. $\bm{g}_{\text{visc}}^m$ represents different viscous terms. The coefficient of each viscosity model is computed over each element $D^k$. Each coefficient can vary independently and is taken to be constant over each element.
The weak form of \eqref{multiple_av_conser_law} is the following:
\begin{align}
     \bigg(\myw, \frac{\partial \myu_h}{\partial t}\bigg)_{D^k} + \sum_{i =1}^d \bigg(-\myf_i(\myu_h), \frac{\partial \myw}{\partial x_i} \bigg)_{D^k} + \langle \myf_*, \myw \rangle_{\partial D^k} = \sum_{m = 1}^M (\boldsymbol{g}_{\text{visc}}^m, \myw)_{D^k}, \quad \myw \in [P^N(D^k)]^n.
     \label{DG_visc}
\end{align}
In this work, each viscous term will be computed using the BR-1 type discretization. 
\subsection{Optimization problem involving the cell entropy inequality}
There now exist $M$ artificial viscosity coefficients $\epsilon^m_k(\myu_h), m = 1, ..., M$ that must be computed. Similar to \cite{chan_av}, we desire to compute the minimum amount of AV to satisfy the cell entropy inequality
\begin{align}
    \sum_{i = 1}^d \bigg(-\frac{\partial \myv_h}{\partial x_i} ,\myf_i(\myu_h)\bigg)_{D^k} + \langle\psi_i(\myu_h)n_i, 1\rangle_{\partial D^k} +\sum_{m=1}^M \Big(\bm{g}_{\text{visc}}^m, \myv\Big)_{D^k}\ge 0
    \label{eq:cont_cell_entropy_inequality_multi_av}.
\end{align}
Introducing too much dissipation can lead to overly diffused solution. Thus, we wish to determine $\epsilon^m_k(\myu_h)$ such that the inequality \eqref{eq:cont_cell_entropy_inequality_multi_av} becomes an equality. We can then construct a minimization problem over $\bm{\epsilon}$ with a simple closed form solution:
\begin{align}
    &\min \frac{1}{2} \boldsymbol{\epsilon}^T\boldsymbol{W} \boldsymbol{\epsilon} \nonumber \\
    s.t \quad \boldsymbol{r}^T \boldsymbol{\epsilon} &= \sigma_k(\myu_h) 
    \label{eq:linear_constraint}\\
    \boldsymbol{\epsilon} &= 
    \begin{bmatrix}
        \epsilon_1, & \cdots & ,\epsilon_M
    \end{bmatrix} \nonumber \\
    \boldsymbol{r}_m &= \sum_{i, j =1}^d(\boldsymbol{K}_{ij}^m \boldsymbol{\Theta}_{m, j}, \boldsymbol{\Theta}_{m, i})_{D^k}, \quad \forall m \in 1, ..., M
    \label{optimization_prob}
\end{align}
where $\boldsymbol{W}$ is a diagonal matrix. For each element $D^k$, \eqref{optimization_prob} solves for $\bm{\epsilon}_m, \forall m = 1, ..., M$.\ \eqref{optimization_prob} is a quadratic optimization problem with a linear constraint which has a simple closed solution using the method of Lagrange multipliers. 

\begin{remark}
    To solve the optimization problem \eqref{optimization_prob}, the Karush-Kuhn-Tucker (KKT) matrix can be formed to solve for the optimal solution
\begin{align*}
    \begin{bmatrix}
        \boldsymbol{W} & \boldsymbol{r^T} \\
        \boldsymbol{r}  & \boldsymbol{0} 
    \end{bmatrix}
    \begin{bmatrix}
        \boldsymbol{\epsilon}^* \\
        \delta^*
    \end{bmatrix} = 
    \begin{bmatrix}
        \boldsymbol{0} \\
        \sigma_k(\myu_h)
    \end{bmatrix},
\end{align*}
where $\delta^*$ is a scalar, and $\bm{\epsilon^*}$ is the solution to \eqref{optimization_prob}. However, we resort to showing a closed form of the solution without inverting the KKT matrix.
\end{remark}
We define the Lagrangian of \eqref{optimization_prob}
\begin{align*}
    \mathcal{L}(\boldsymbol{\epsilon}, \lambda) = \frac{1}{2}\boldsymbol{\epsilon}^T\boldsymbol{W} \boldsymbol{\epsilon}  + \lambda (\boldsymbol{r}^T \boldsymbol{\epsilon} - \sigma_k(\myu_h)),
\end{align*}
where $\lambda$ is a scalar quantity. Define the vector $\boldsymbol{x} = [\boldsymbol{\epsilon} \hspace{5pt} \lambda]^T$, then
\begin{align*}
    \nabla_{\myx} \mathcal{L}(\myx) = 
    \begin{bmatrix}
        \boldsymbol{\epsilon}^T \boldsymbol{W} + \lambda \boldsymbol{r} \\ \boldsymbol{r}^T \boldsymbol{\epsilon} - \sigma_k(\myu_h)
    \end{bmatrix}.
\end{align*}
The solution to \eqref{optimization_prob} is the value at which $\bm{x}$ for which the gradient of the Lagrangian is $\bm{0}$
\begin{align*}
    \begin{bmatrix}
        \boldsymbol{\epsilon}^T \boldsymbol{W} + \lambda \boldsymbol{r} \\\boldsymbol{r}^T \boldsymbol{\epsilon} - \sigma_k(\myu_h)
    \end{bmatrix} = \boldsymbol{0}.
\end{align*}
Note that the last equation is equivalent to the imposed linear constraint. The method of Lagrange multipliers is well established, so we omit the details for brevity. For the present work, the weight matrix $\bm{W}$ is chosen as the identity, implying no preferential weighting among the viscosity models. Under this assumption, the closed form solution of $\bm{\epsilon}$ is
\begin{align}
    \bm{\epsilon}_i = \frac{\sigma_k(\myu_h)\bm{r}_i}{\sum_{j =1}^M\bm{r}_j^2}.
\end{align}
When $M = 2$, we can define $a, b$ as \newline $\bm{r}_1 = \sum_{i, j}^d(\bm{K}_{1, ij} \bm{\Theta}_{1, j}, \bm{\Theta}_{1, i})_{D^k}, \bm{r}_2 = \sum_{i, j}^d(\bm{K}_{2, ij} \bm{\Theta}_{2, j}, \bm{\Theta}_{2, i})_{D^k}$, respectively, then 
\begin{align}
    \boldsymbol{\epsilon}_{1} = \frac{\sigma_k(\myu_h)a}{a^2 + b^2}, \quad \bm{\epsilon}_{2} = \frac{\sigma_k(\myu_h)b}{a^2 + b^2}.
    \label{closed_form}
\end{align}
The closed form \eqref{closed_form} avoids the need to invert the KKT matrix.
\subsection{Viscosity models}
In this section, we briefly introduce two viscosity models used in this work.
\subsubsection{Sv\"{a}rd thermal viscosity}
Using the proposed optimization problem, we apply it to the modified formulation of the compressible Navier-Stokes equations proposed by Sv\"{a}rd \cite{svard2024convergent}: 
\begin{align*}
    \frac{\partial \myu}{\partial t} + \sum_{i = 1}^3\frac{\partial \myf_i(\myu)}{\partial x_i} = \epsilon^1 \Delta \myu + \begin{bmatrix}
        0 \\ 0 \\ \nabla \cdot (\epsilon^2 \nabla T)
    \end{bmatrix}
\end{align*}
where $\epsilon^2, T$ denotes the thermal diffusivity, and temperature, respectively. Compared with the Laplacian artificial viscosity formulation \eqref{eq:conser_law_av}, the Sv\"{a}rd formulation includes an additional thermal diffusion term. The Laplacian and thermal component will be considered as the first and second viscosity model, respectively. We allow the diffusive coefficients $\epsilon^1, \epsilon^2$ to vary independently. However, one needs to derive the corresponding entropy variables of the specific energy $E$, and the term $\frac{dE}{dv_E}$, where $v_E = \frac{\partial S}{\partial E}$ represents the entropy variable corresponding to the total energy variable. 
The term $\frac{dE}{dv_E}$ was found to be
\begin{align}
    \frac{dE}{dv_E} = \sum_{i=1}^d (\gamma - 1)^2 \Bigg(\frac{(E - \frac{1}{2} \rho u_i^2)}{\rho}\Bigg)^2 > 0.
    \label{eq:dEdvE}
\end{align} 
Note that this term is strictly non-negative. The temperature $T$ is a scalar quantity which simplifies this term to a scalar quantity as well. The thermal component of Sv\"{a}rd viscosity can be discretized using the BR1 discretizations. Then, the entropy contribution of the Sv\"{a}rd thermal viscosity is computed as
\begin{align*}
    (\bm{g_\text{visc}}^{\text{Sv\"{a}rd}}, \myv) 
    &= (\bm{g}_\text{visc}^{1}, \myv) + (\bm{g}_{\text{visc}}^2, \myv) \\
    &=-\bigg[\epsilon_k^1(\myu_h)\Big((\bm{K}_{1, ij} \bm{\Theta}_{1, j}, \bm{\Theta}_{1, i})_{D^k} + \epsilon_k^2(\myu_h)(\bm{K}_{2, ij} \bm{\Theta}_{2, j}, \bm{\Theta}_{2, i})_{D^k}\bigg].
\end{align*}
$\bm{g}_\text{visc}^1, \bm{g}_\text{visc}^2$ are the Laplacian, and thermal viscous terms, respectively, and $\bm{K}_{1, ij}, \bm{K}_{2, ij}$ are defined as
\begin{align}
    \bm{K}_{1, ij} = \delta_{ij}\frac{\partial \myu}{\partial \myv}, \quad \bm{K}_{2, ij} = \delta_{ij}\begin{bmatrix}
        0 & 0 & 0 \\
        0 & 0 & 0 \\
        0 & 0 & \frac{\partial E}{\partial v_E}
    \end{bmatrix}.
    \label{eq:K2_defintion}
\end{align}
and $\bm{\Theta}_{2, i}, \bm{\sigma}_{2, i}$ is the the DG approximation of $v_E$ padded with zeros and $L^2$ projection of $\sum_{i, j = 1}^d \bm{K}_{2, ij}\bm{\Theta}_{2, j}$, respectively
\begin{align}
    \bm{\Theta}_i \approx 
    \begin{bmatrix}
        0 \\
        0 \\
        \frac{\partial v_E}{\partial x_i}
    \end{bmatrix}, \quad (\bm{\sigma}_{2, i}, \myw)_{D^k} = \bigg(\sum_{i, j = 1}^d \bm{K}_{2, ij}\bm{\Theta}_{2, j}, \myw\bigg), \quad \forall \myw \in [P^N(D^k)]^n
    \label{eq:DG_thermal_defintion}.
\end{align}
\begin{remark}
    The BR-1 discretization of the thermal component of the Sv\"{a}rd viscosity model is also provably dissipative. The proof relies on the sign of the term $\frac{dE}{dv_E}$ which is non-negative. We first substitute $\bm{\Theta}_{2, i}$, $\bm{K}_{2, ij}$ defined in \eqref{eq:K2_defintion}, \eqref{eq:DG_thermal_defintion}, $L^2$ projection of $\sum_{i, j = 1}^d \bm{K}_{2, ij}\bm{\Theta}_{2, j}$ for $\bm{\Theta}_i, \bm{K}_{ij}, \bm{\sigma}_i$ in Lemma 3.1 of \cite{chan2022entropy}, respectively.  The boundary terms also disappear using Lemma \ref{lemma:positive definite of av} by letting $\bm{w}_{2, i} = \bm{\Theta}_{2, i}, \bm{w}_{1, i} = \bm{\sigma}_{2, i}$. Summing up over all elements yields that $\sum_k -(\bm{g_\text{visc}}^2, \myv) \ge 0.$
\label{remark:svard positive av}
\end{remark}
We utilize the aforementioned discretization of the Laplacian viscosity model for the Sv\"{a}rd viscosity model, and Lemma \ref{lemma:positive definite of av}, Remark \ref{remark:svard positive av} indicate that the entropy contribution of each of the viscous terms (Laplacian, thermal) are positive. Then, we can extend Lemma \ref{lemma:entropy stable av} by summing over the two viscous terms 
\begin{align*}
    -(\bm{g}_{\text{visc}}^{\text{Sv\"{a}rd}}, \myv) 
    &= -\sum_{m = 1}^{2} (\bm{g}_\text{visc}^m, \myv)  \\
    &= \sum_{i = 1}^d \bigg[ \epsilon_k^1(\myu_h)(\bm{\Theta}_{1, i}, \bm{K}_{1, ii} \bm{\Theta}_{1, i} )_{D^k} + \epsilon_k^2(\myu_h) \Big (\bm{\Theta}_{2, i}, \bm{K}_{2, ii}\bm{\Theta}_{2, i} \Big)_{D^k}\bigg] \ge 0
\end{align*}

\begin{remark}
    From Lemma \ref{lemma:positive definite of av} and Remark \ref{remark:svard positive av}, both the Laplacian and thermal viscous discretizations are provably entropy dissipative. Thus, the optimization problem \eqref{optimization_prob} directly solves for the condition under Lemma $\ref{lemma:entropy stable av}$, and the computed $\epsilon_k^m(\myu_h)$ enforces the global entropy inequality. 
\end{remark}
\subsubsection{Spectral vanishing viscosity (SVV)}
Spectral Vanishing Viscosity (SVV) was first introduced by Tadmor \cite{tadmor1989convergence} where it was applied to Burger's equation, i.e.,
\begin{align*}
    \frac{\partial u}{\partial t} + \frac{\partial u^2/2}{\partial t} = \epsilon \Big( \frac{\partial}{\partial x} \bm{\mathcal{F}}  \frac{\partial u}{\partial x} \Big),
\end{align*}
where $\bm{\mathcal{F}} = \bm{VF}\bm{V}^{-1}$ denotes the filtering operator, $\bm{V}_{ij} = \phi_j(x_i)$ denotes the generalized Vandermonde matrix, $\phi_j$ denotes the $j$th orthonormal polynomial, $x_i$ are interpolation points, and $\bm{F}$ is a diagonal matrix. The filtering operator $\bm{\mathcal{F}}$ acts on the modal space of the solution by using a high-pass filter, only affecting the high modes and not modifying the lower modes. 

Tadmor showed that this approach damps spurious oscillations, and converges to the unique entropy solution for the inviscid Burger's equation \cite{tadmor1989convergence}. Since then, SVV has gained a lot of traction, especially for 3D turbulence modeling, Large Eddy Simulation (LES), and high Reynolds flow as the SVV coefficient vanishes in the laminar limit, and stabilizes the solution with turbulent flows \ \cite{manzanero2020design, ferrer2017interior, moura2016eigensolution, karamanos2000spectral}. Though the present work only deals with simulation up to two dimensions, we expect similar behavior when simulating flows with high shear stress and vortices. 

What remains is to decide what filter kernel to use. Tadmor introduced a ``step" high filter pass \cite{tadmor1989convergence, mateo2022entropy}
\begin{align*}
\bm{F}_{k} = 
    \begin{cases} 
      0 & k\leq M \\
     1 & k > M
   \end{cases}\hspace{5pt}.
\end{align*}
Another choice filter uses an exponentially decaying coefficients \cite{mateo2022entropy, maday1993legendre} 
\begin{align*}
\bm{F}_{k} = 
    \begin{cases} 
      0 & k\leq M \\
     \exp\Big[-\big(\frac{k - N}{k - M}\big)^2\Big] & k > M,
   \end{cases}
\end{align*}
where $k, M, N$ denotes the wavenumber (polynomial degree), cutoff mode, degree of approximation, respectively. Another choice of filter kernel is\ \cite{mateo2022entropy}
\begin{align*}
    \bm{F}_{i} = \bigg(\frac{i - 1}{N}\bigg)^2.
\end{align*}
It is clear how the filtering operator will act on 1D solutions. For multidimensional systems, we will apply a tensor product of the filtering operator, i.e., $\bm{\mathcal{F}}_{2D} = \bm{\mathcal{F}}_{1D} \otimes \bm{\mathcal{F}}_{1D}$ if assuming the element is quadrilateral. The 2D filter can be interpreted as fixing a filter weight for a given power in one Cartesian direction, and then making adjustments to the weights as the power of the other Cartesian direction is changed. 

Here, we introduce a provably dissipative discretization to incorporate SVV as the secondary viscosity model of choice using BR1. We will incorporate both the SVV model (indexed as $m = 2$) with the Laplacian viscosity model (indexed as $m = 1$). We apply filtering to the DG gradient of $\myv$, and then apply another filtering operation to the $L^2$ projection of $\sum_{i, j=1}^d\bm{K}_{ij}\bm{\mathcal{F}}\bm{\Theta}_j$. Denote $\bm{\mathcal{F}} = \bm{V}\myF\bm{V}^{-1}$, then the BR1 discretization of SVV model with the filtering procedure as described above can be formulated as
\begin{align*}
    (\bm{\Theta}_i, \bm{\sigma}_{2, i})_{D^k} = \bigg(\frac{\partial \bm{v}}{\partial x_i}, \bm{\sigma}_{2, i} \bigg)_{D^k} &+ \frac{1}{2} \langle \llbracket \myv_h \rrbracket n_i, \bm{\sigma}_{2, i} \rangle_{\partial D^k}, \quad \forall \bm{\sigma}_{2, i} \in [P^N(D^k)]^n  \\ 
    \bm{\Theta}_{2, i} &= \bm{\mathcal{F}}\bm{\Theta}_i \\
    (\boldsymbol{\sigma}_i, \bm{\Theta}_i)_{D^k} = \Bigg( \sum_{j = 1}^d \epsilon^{2}_k(\myu_h) &\boldsymbol{K}_{ij} \boldsymbol{\Theta}_{2, j}, \bm{\Theta}_i \Bigg)_{D^k}, \quad \forall \bm{\Theta}_i \in [P^N(D^k)]^n \\
    \bm{\sigma}_{2, i} &= \bm{\mathcal{F}}\bm{\sigma}_i \\
    (\boldsymbol{g_\text{visc}}^{2}, \myv)_{D^k} = \sum_{i =1}^d \bigg[ \bigg(\boldsymbol{-\sigma}_{2, i}, \frac{\partial \myv}{\partial x_i}\bigg)_{D^k} &+ \langle \mylcurly \boldsymbol{\sigma}_{2, i} \myrcurly n_i, \myv \rangle_{\partial D^k} \bigg], \quad \forall \myv \in [P^N(D^k)]^n
\end{align*}
Similar steps to Lemma 3.1 in \cite{chan2022entropy} yield that
\begin{align*}
    -(\bm{g}_\text{visc}^{2}, \myv)_{D^k} = \epsilon_k^{2}(\myu_h)\bigg(\frac{\partial \myv}{\partial x}, \boldsymbol{\mathcal{F}}\bm{K}\bm{\mathcal{F}}\frac{\partial \myv}{\partial x}\bigg)_{D^k}.
\end{align*}
However, it remains unclear if the above quantity is strictly non-negative. Non-negativity follows if $\bm{\mathcal{F}}$ is symmetric with respect to the quadrature-based $L^2$ inner product, i.e.,
\begin{align*}
    (\bm{x}, \bm{\mathcal{F}y})_{D^k} &= (\bm{\mathcal{F}x}, \bm{y})_{D^k} \Rightarrow \\
    \bm{x}^T\bm{MF}\bm{y} &=  \bm{x}^T\bm{F^T}\bm{My} 
\end{align*}
where $\bm{M}$ is the quadrature-based mass matrix.
\begin{lemma}
    Let $\bm{\mathcal{F}} = \bm{VF}\bm{V}^{-1}$ denote the filtering operator, where $\bm{V}_{ij} = \phi_j(x_i)$ denotes the generalized Vandermonde matrix, $\phi$ denotes the $j$th orthonormal polynomial basis function. Here, $\bm{F}$ is a diagonal matrix, and $\bm{M}$ is the mass matrix. Suppose that $\bm{V}^T\bm{MV} = \bm{D}$, where, $
    \bm{D}$ is a diagonal matrix, then $\bm{\mathcal{F}}$ is symmetric respect to the quadrature-based $L^2$ inner product, i.e., 
    \begin{align*}
        \bm{M\mathcal{F}} = (\bm{M\mathcal{F}})^T.
    \end{align*}
    \label{lemma:filter_symmetric_respect_mass}
\end{lemma}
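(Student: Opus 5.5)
The approach is a direct matrix computation. We rewrite $\bm{M}$ in terms of $\bm{V}$ and $\bm{D}$, then observe that $\bm{M}\bm{\mathcal{F}}$ reduces to a congruence transform of a product of diagonal matrices.

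First, since $\bm{V}$ is square and invertible (it is the generalized Vandermonde matrix at unisolvent interpolation points), the hypothesis $\bm{V}^T\bm{M}\bm{V} = \bm{D}$ can be solved for the mass matrix:
\begin{align*}
    \bm{M} = \bm{V}^{-T}\bm{D}\bm{V}^{-1}.
\end{align*}

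Second, we substitute $\bm{\mathcal{F}} = \bm{V}\bm{F}\bm{V}^{-1}$:
\begin{align*}
    \bm{M}\bm{\mathcal{F}} = \bm{V}^{-T}\bm{D}\bm{V}^{-1}\bm{V}\bm{F}\bm{V}^{-1} = \bm{V}^{-T}(\bm{D}\bm{F})\bm{V}^{-1}.
\end{align*}
Because $\bm{D}$ and $\bm{F}$ are both diagonal, their product $\bm{D}\bm{F}$ is diagonal and hence symmetric.

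Third, we transpose:
\begin{align*}
    (\bm{M}\bm{\mathcal{F}})^T = (\bm{V}^{-1})^T(\bm{D}\bm{F})^T(\bm{V}^{-T})^T = \bm{V}^{-T}\bm{D}\bm{F}\bm{V}^{-1} = \bm{M}\bm{\mathcal{F}},
\end{align*}
which is the claim. Equivalently, $(\bm{x},\bm{\mathcal{F}}\bm{y})_{D^k} = (\bm{\mathcal{F}}\bm{x},\bm{y})_{D^k}$, since $\bm{M}$ is symmetric and $\bm{M}\bm{\mathcal{F}} = \bm{\mathcal{F}}^T\bm{M}$.

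No step here is a real obstacle. The only point needing care is the invertibility of $\bm{V}$, which follows because the nodal points are unisolvent for $P^N$.

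One remark worth adding concerns when the hypothesis $\bm{V}^T\bm{M}\bm{V} = \bm{D}$ actually holds. It holds with $\bm{D} = \bm{I}$ whenever the quadrature is exact for products of basis polynomials, i.e. for degree $2N$ polynomials. For LGL collocation, exactness is only up to degree $2N-1$, so only the top mode's self-product is inexact. The matrix then stays diagonal, but with one entry different from $1$, so the lemma still applies. In the tensor-product 2D case we apply the same argument using Kronecker products, since all matrices factor as tensor products.
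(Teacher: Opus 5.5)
Your proof is correct and is essentially the paper's argument: both are direct matrix manipulations that use $\bm{V}^T\bm{M}\bm{V}=\bm{D}$ together with the fact that diagonal matrices are symmetric and commute. Your form $\bm{M}\bm{\mathcal{F}} = \bm{V}^{-T}(\bm{D}\bm{F})\bm{V}^{-1}$, a congruence of a diagonal matrix, is slightly cleaner than the paper's step of inserting $\bm{D}\bm{D}^{-1}$ and using $\bm{D}^{-1}\bm{F}\bm{D}=\bm{F}$, because it never needs $\bm{D}$ to be invertible.
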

\begin{proof}
    First note the mass matrix $\bm{M}$ is symmetric. We use the following identity
    \begin{align}
        \bm{V}^T\bm{MV} = \bm{D}.
        \label{eq:vandermonde_modal_basic}
    \end{align}
    Two other identities will be used by multiplying \eqref{eq:vandermonde_modal_basic} with $\bm{V}, \bm{V}^{-1}$
    \begin{align*}
        \bm{MV} = \bm{V^{-T}D}, \quad \bm{V^T}\bm{M} = \bm{DV^{-1}}.
    \end{align*}
    Using the above identities, 
    \begin{align*}
        (\bm{M\mathcal{F}})^T  
        &= \bm{V^{-T}F}\underbrace{\bm{{V^T}}\bm{M}}_{\bm{DV^{-1}}} 
        = \underbrace{\bm{V^{-T}D}}_{\bm{MV}}\bm{D^{-1}F}\bm{D}V^{-1} \\
        &= \bm{MV\underbrace{D^{-1}FD}_{\bm{F}}V^{-1}} 
        = \bm{MVFV^{-1}} \\
        &= \bm{M\mathcal{F}}
    \end{align*}
    The third equality holds because $\bm{D}, \bm{F}$ are both diagonal matrices.
\end{proof}
\begin{remark}
    If we compute the inner products using exact integration (i.e Gauss integration), then the following identity holds
    \begin{align*}
        \bm{V^TMV} &= \bm{I}.
    \end{align*}
    where $\bm{I}$ is the identity matrix. However, in this work, we are approximating the inner product using Gauss-Lobatto integration. A different identity must be used
     \begin{align*}
         \bm{V^TMV} = \bm{D}, \quad 
         \bm{D} =
         \begin{cases}
             \delta_{ij} & i + j \neq 2N \\
             \frac{2N + 1}{N} & i = j = N
         \end{cases} 
     \end{align*}
     where $N$ is the degree of approximation. The proof of this identity is in \cite{gassner2011comparison}. For either choice of integration, the assumption that $\bm{V^TMV}$ is a diagonal matrix holds.  
\end{remark}
Since the nodal DGSEM formulation utilizes Legendre-Gauss-Lobotto quadrature points, Lemma \ref{lemma:filter_symmetric_respect_mass} implies that the entropy contribution of SVV is non-negative: 
\begin{align*}
    \bigg(\frac{\partial \myv}{\partial x}, \boldsymbol{\mathcal{F}}\bm{K}\bm{\mathcal{F}}\frac{\partial \myv}{\partial x}\bigg)_{D^k}
    &= \bigg(\bm{\mathcal{F}}\frac{\partial \myv}{\partial x}, \bm{K}\bm{\mathcal{F}}\frac{\partial \myv}{\partial x}\bigg)_{D^k} \ge 0
\end{align*}
which is positive since $\bm{K}$ is a positive definite matrix.
\begin{remark}
    The BR-1 discretization of spectral vanishing viscosity model is also provably dissipative. The proof relies on the positive definiteness of $\bm{K}_{ij}$ and the order of which the filter is applied. We substitute $\bm{\sigma}_{2, j}, \sum_{i, j=1}^d(\bm{K}_{ij}
    \bm{\mathcal{F}}\bm{\Theta}_j, \bm{\mathcal{F}}\bm{\Theta}_i)_{D^k}$ for $\bm{\sigma}_j$ and $\sum_{i, j = 1}^d (\bm{K}_{ij} \bm{\Theta}_j, \bm{\Theta}_i)_{D^k}$ found in Lemma 3.1 of \cite{chan2022entropy}. The boundary terms also disappear using Lemma \ref{lemma:positive definite of av} by letting $\bm{w}_{2, i} = \bm{\Theta}_i, \bm{w}_{1, i} = \bm{\sigma}_{2, i}$. Summing up over all elements yields that that $\sum_k -(\bm{g_\text{visc}}^{2}, v) \ge 0.$
\label{remark:svv positive av}
\end{remark}
\begin{remark}
    From Lemma \ref{lemma:positive definite of av}, and Remark \ref{remark:svv positive av}, both the Laplacian and SVV discretization are provably dissipative. Thus, the optimization problem \eqref{optimization_prob} directly solves for the condition under Lemma $\ref{lemma:entropy stable av}$. Therefore, the computed $\epsilon_k^m(\myu_h)$ satisfy the global entropy inequality. 
\end{remark}
\section{Numerical experiments}
\label{sec:numerical_experiments}
In this section, we present numerical experiments that demonstrate both the robustness of this new approach and verify that the proposed approach improves upon existing methods. Collocation method will be utilized using Legendre Gauss-Lobotto (LGL) nodes. For 2D, we use quadrilateral generated mesh. For the surface the flux, we use local Lax-Fredreichs flux (LLF). $N, M$ represents degree of approximation and number of elements in each Cartesian direction, respectively. The final simulation is denoted as $T$. An adaptive four-stage, third-order strong stability preserving Runge–Kutta time stepper (SSPRK(4,3)) with absolute and relative tolerances of $10^{-6}, 10^{-4}$, respectively, was used. All experiments are implemented in Julia using \texttt{StartUpDG.jl} and \texttt{Trixi.jl} \cite{ranocha2021adaptive}
libraries. For time integration, we utilize the \texttt{OrdinaryDiffEq.jl} library \cite{rackauckas2017differentialequations}. 

\subsection{Compressible Euler equations}
In both 1D and 2D experiments, we investigate the compressible Euler equation governed by the ideal gas law. Let $\myu$ denote the vector of conservative variables. In 2D, 
\begin{align}
    \myu = \{\rho, \rho u_1, \rho u_2, \rho E\} \in \mathbb{R}^4
\end{align}
where $\rho, u_i, E$ denotes density, velocity in $i$th coordinate direction, specific total energy, respectively. Pressure $p$ is related to density and specific internal energy $e$ by the ideal gas law with constitutive relations
\begin{align}
    p = (\gamma - 1) \rho e, \quad E = e + \frac{1}{2} \sum_{i =1}^2 u_i^2,
\end{align}
where $e$ is the internal energy density and $\gamma$ is the heat capacity ratio which is set to $1.4$.

The compressible Euler equation in $d$ dimensions are
\begin{align}
    \frac{\partial \myu}{\partial t} + \sum_{m = 1}^d \frac{\partial \myf_m(\myu)}{\partial x_m} = \bm{0},
\end{align}
where $\myf_i$ is the convective flux in the $i$th coordinate direction. In 2D, 
\begin{align}
    \myf_1 = 
    \begin{bmatrix}
        \rho u_1 \\
        \rho u_1^2 + p \\
        \rho u_1u_2 \\
        u_1 (E + p)
    \end{bmatrix}, \quad 
    \myf_2 = 
    \begin{bmatrix}
        \rho u_2 \\
        \rho u_1u_2 \\
        \rho u_2^2 + p \\
        u_2 (E + p)
    \end{bmatrix}.
\end{align}
While the compressible Euler equations admit an infinite family of convex entropy functions \cite{harten1983symmetric}, the compressible Navier-Stokes equations possess a mathematical entropy inequality corresponding to only a single entropy function 
$S(\myu)$ and associated entropy potential $\psi_i(\myu)$
\begin{align}
    S(\myu) =-\rho s, \quad \psi_i(\myu) = \rho u_i
\end{align}
where $s = \log(\frac{p}{\rho^\gamma})$ denotes physical entropy under ideal gas law. We utilize this entropy for all numerical experiments.
\subsection{Density wave and high order accuracy}
We first verify high order accuracy of the proposed approach by performing a convergence test on the discretization using the Sv\"{a}rd thermal diffusive term with the Laplacian viscosity. We note that convergence test of schemes using SVV with the Laplacian viscosity model yield similar behavior. The initial condition for the 1D density wave test case for the compressible Euler equation is 
\begin{align*}
    (\rho, u, p) = (1 + 0.5\sin(x - t), 1, 1)
\end{align*}
We compute the $L^2$ errors at final time $T = 1.7$. Optimal rates of convergence are observed for polynomial degrees $N = 1, ... ,4$.
\begin{table}[H]
\centering
\small
\setlength{\tabcolsep}{4pt}
\renewcommand{\arraystretch}{0.9}

\begin{tabular}{c c c c c}
\toprule
$h$ & $N=1$ & Rate & $N=2$ & Rate \\
\midrule
$1/2$  & $6.514\times10^{-1}$ &      & $4.639\times10^{-2}$ &      \\
$1/4$  & $2.219\times10^{-1}$ & 1.554 & $8.181\times10^{-3}$ & 2.504 \\
$1/8$  & $6.034\times10^{-2}$ & 1.879 & $1.353\times10^{-3}$ & 2.596 \\
$1/16$ & $1.539\times10^{-2}$ & 1.971 & $1.890\times10^{-4}$ & 2.840 \\
$1/32$ & $3.865\times10^{-3}$ & 1.993 & $2.443\times10^{-5}$ & 2.952 \\
\bottomrule
\end{tabular}
\end{table}
\vspace{-1cm}
\begin{table}[H]
\centering
\small
\setlength{\tabcolsep}{4pt}
\renewcommand{\arraystretch}{0.9}
\begin{tabular}{c c c c c}
\toprule
$h$ & $N=3$ & Rate & $N=4$ & Rate \\
\midrule
$1/2$  & $5.375\times10^{-3}$ &      & $2.734\times10^{-4}$ &      \\
$1/4$  & $2.278\times10^{-4}$ & 4.560 & $1.368\times10^{-5}$ & 4.320 \\
$1/8$  & $1.346\times10^{-5}$ & 4.081 & $5.400\times10^{-7}$ & 4.663 \\
$1/16$ & $8.080\times10^{-7}$ & 4.058 & $1.845\times10^{-8}$ & 4.871 \\
$1/32$ & $5.015\times10^{-8}$ & 4.010 & $5.917\times10^{-10}$ & 4.963 \\
\bottomrule
\end{tabular}

\caption{Computed $L^2$ errors for the 1D density wave using Sv\"{a}rd multiple artificial viscosity-based entropy stable DG method.}
\label{tab:density_wave_1d}
\end{table}
\subsection{Receding flow problem}
The receding flow problem is a 1D Riemann problem that experiences an ``overheating" or ``false heating" issue that has been a long-standing open problems in the field of computational fluid dynamics \cite{liou2017recedingflow}. The false heating issue does not diminish even with refinement of mesh sizes and smaller time steps \cite{liou2013overheating}.

We assume uniform density and pressure, but a velocity gradient causes the flow to move away from the center region. The movement creates a rarefying region in the center where pressure, density, and temperature monotonically decreases to the center region. However, numerical methods consistently fail to accurately recover such behavior. The incorrect profile is attributed to the production of physical entropy in the center region which violates the 2nd law of thermodynamics \cite{liou2017recedingflow}. 

The Sv\"{a}rd thermal viscosity model adds additional numerical dissipation dependent on the thermal gradient. We expect that inclusion of the thermal diffusive term in the Sv\"{a}rd thermal viscosity model will improve the numerical solution without disrupting the profiles of other variables. 

Figure \ref{fig:receding flow} compares the temperature profile between numerical solution when utilizing Sv\"{a}rd thermal viscosity model and Laplacian viscosity model. We observe that the Sv\"{a}rd model reduces the large temperature spike but does not provide a fix to the false heating phenomena. Furthermore, Figure \ref{fig:receding density} showcases that the inclusion of thermal dissipation does not heavily impact the resolution of the density or pressure profile while improving the temperature profile. 
\begin{figure*}[H]
\[
    (\rho_0, p_0, u_0) = 
    \begin{cases} 
      (1.0, 0.4, -2.0) & x \le 0 \\
      (1.0, 0.4, 2.0)  & x > 0
   \end{cases}
   \]
   \caption*{1D Receding flow initial condition}
\end{figure*}

\begin{figure}[H]
    \hspace*{-2in}
    \centering
    \subfloat[\centering $N = 1, M = 200$]{{\includegraphics[width=6cm]{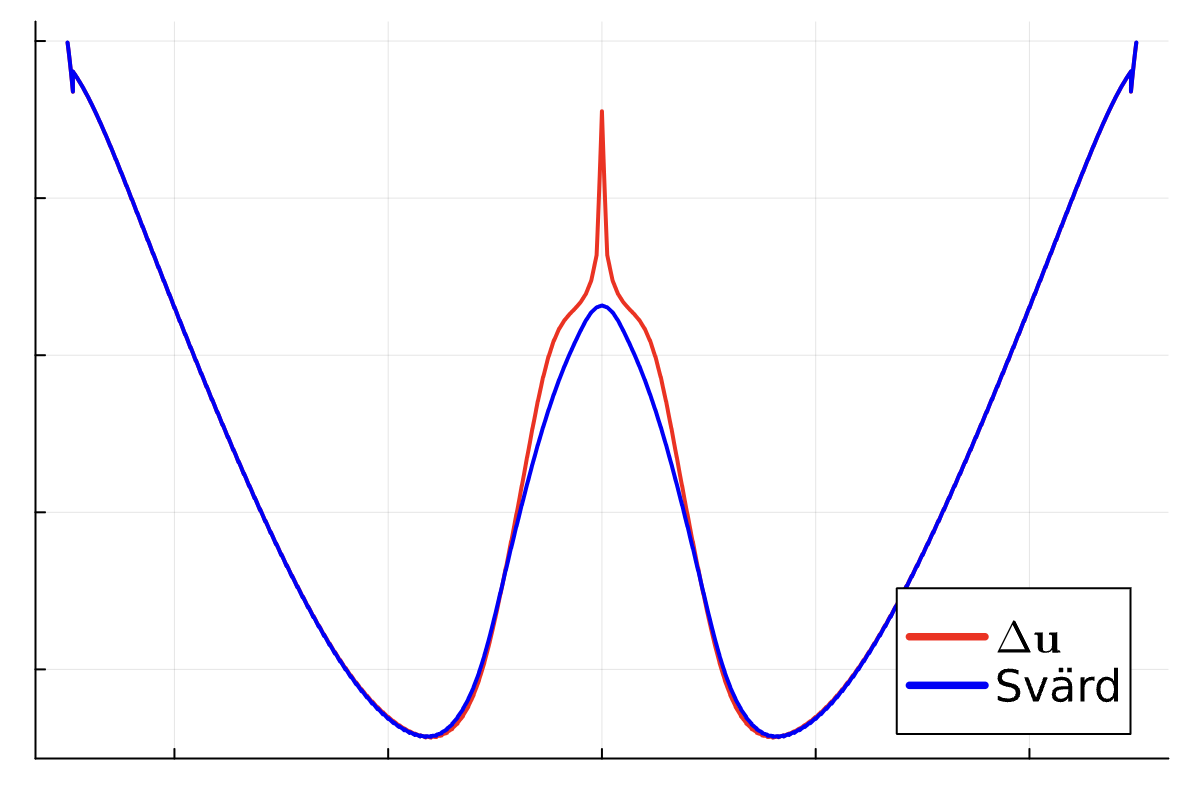} }}%
    \qquad
    \subfloat[\centering $N = 2, M = 130$]{{\includegraphics[width=6cm]{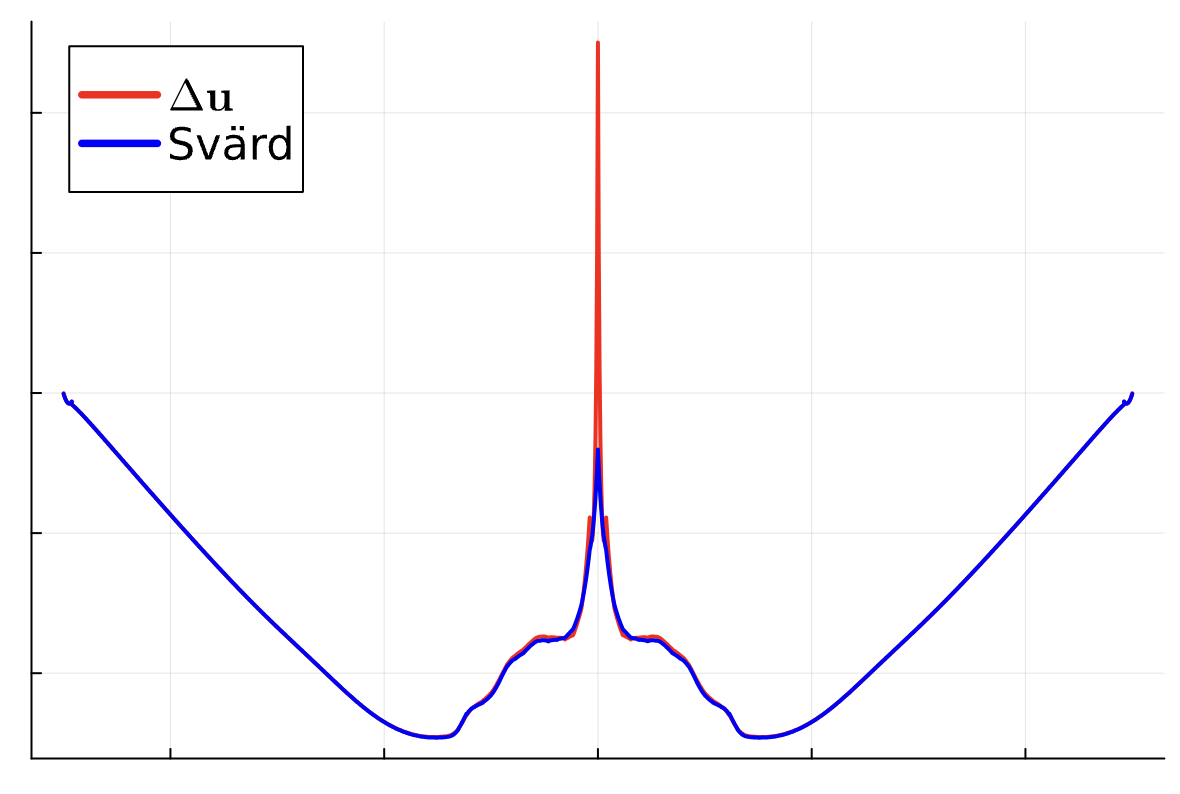} }}%
    \hspace*{-2in}
    \caption{Temperature profile of receding flow problem, $T = 0.18$}%
    \label{fig:receding flow}%
\end{figure}
\begin{figure}[H]
    \hspace*{-2in}
    \centering
    \subfloat[\centering Density]{{\includegraphics[width=6cm]{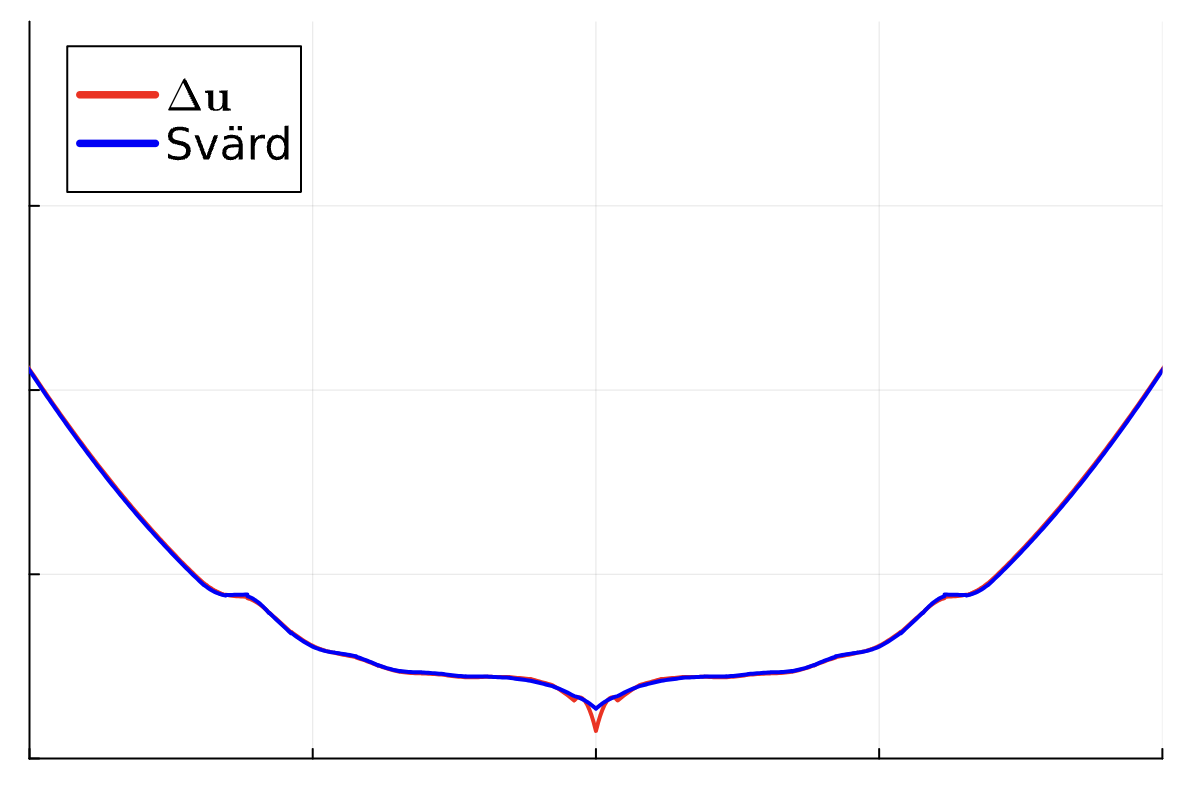} }}%
    \qquad
    \subfloat[\centering Pressure]{{\includegraphics[width=6cm]{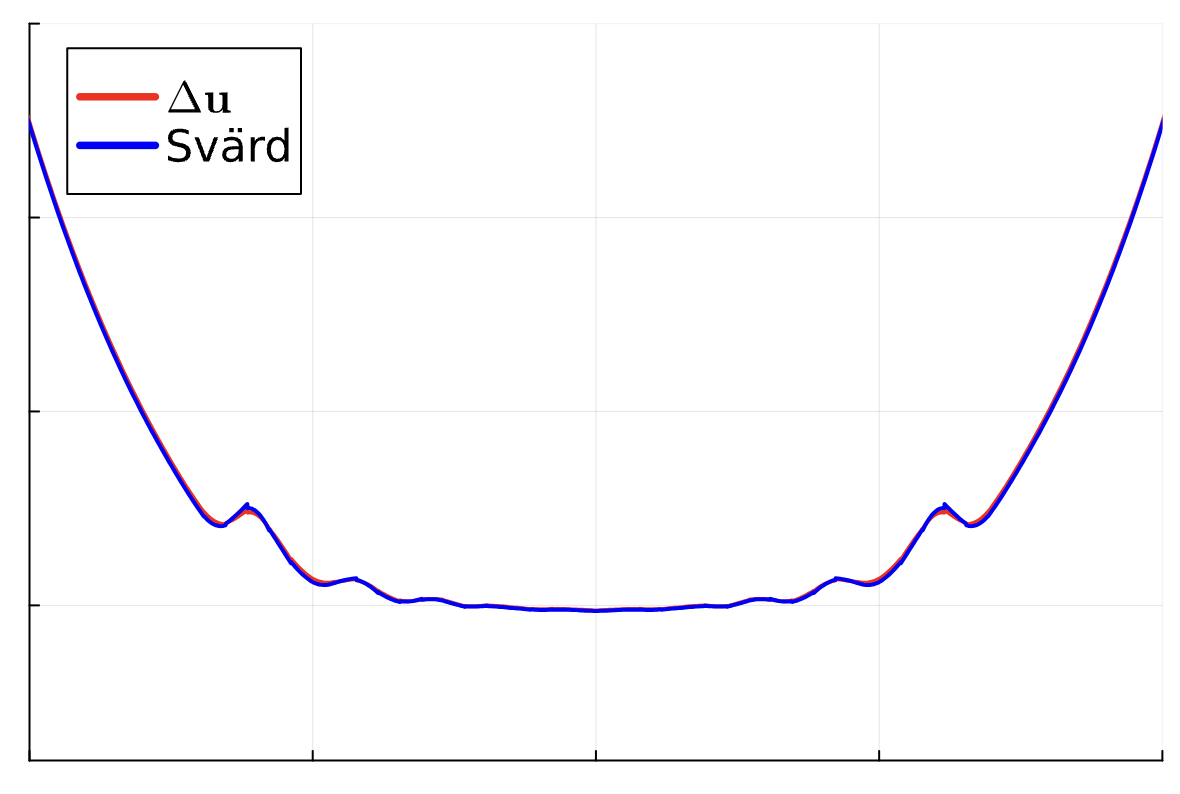} }}%
    \hspace*{-2in}
    \caption{Receding Flow $N = 2, M = 130$}%
    \label{fig:receding density}%
\end{figure}
\subsection{Kelvin Helmholtz instability}
We demonstrate the impact of incorporating the SVV model through long-time simulations of the Kelvin–Helmholtz instability (KHI). Problem setup can be found in \cite{chan2022entropyprojection}. As KHI exhibits shear stresses and features vorticular movements, we expect inclusion of SVV will better model the physics that dictate the phenomena, and the coefficient of SVV that is added will be non-negligible. 

Figure \ref{fig:KHI_Coef} shows the density profile of the long-time KHI and heat map of the SVV coefficient when using both SVV and Laplacian viscosity model. KHI is prone to positivity violations, which can cause simulations to crash due to negative density or pressure. In contrast, simulations using the proposed method remain free of such physical inconsistencies. We note that numerical solutions of the KHI depend strongly on the discretization and do not necessarily converge to a unique solution under mesh or polynomial degree refinement \cite{fjordholm2017construction}. Thus, we show this experiment primarily as a measure of robustness and as a comparison of the activation behavior of the different viscosity models. 

 The heat map in Figure \ref{fig:KHI_Coef} indicates that the SVV coefficient is not negligible, and the pattern exhibited in the heat map suggests that SVV is capable of detecting regions of high shear stress and under-resolved vortices. 
 
\begin{figure}[H]
    \hspace*{-2in}
    \centering
    \subfloat[\centering Density]{{\includegraphics[height=6cm]{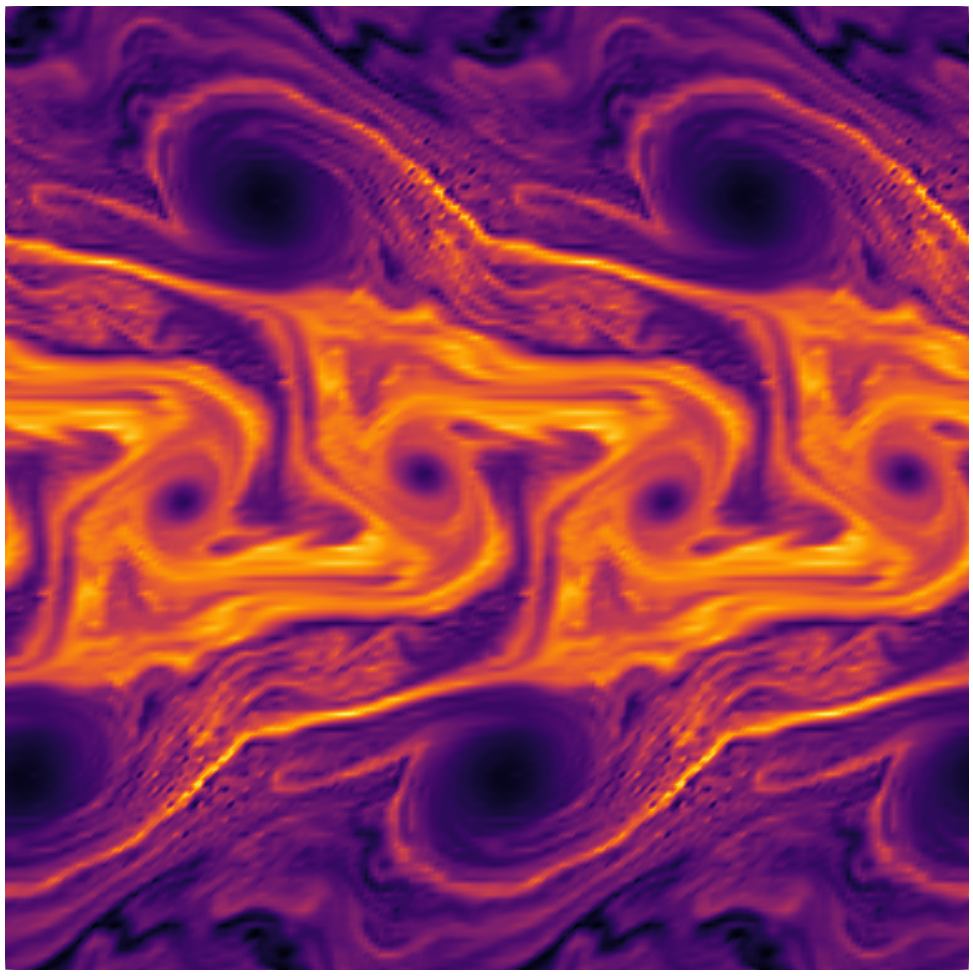} }}%
    \subfloat[\centering SVV coefficient $\epsilon^{SVV}_k(\myu_h)$]{{\includegraphics[height=6cm]{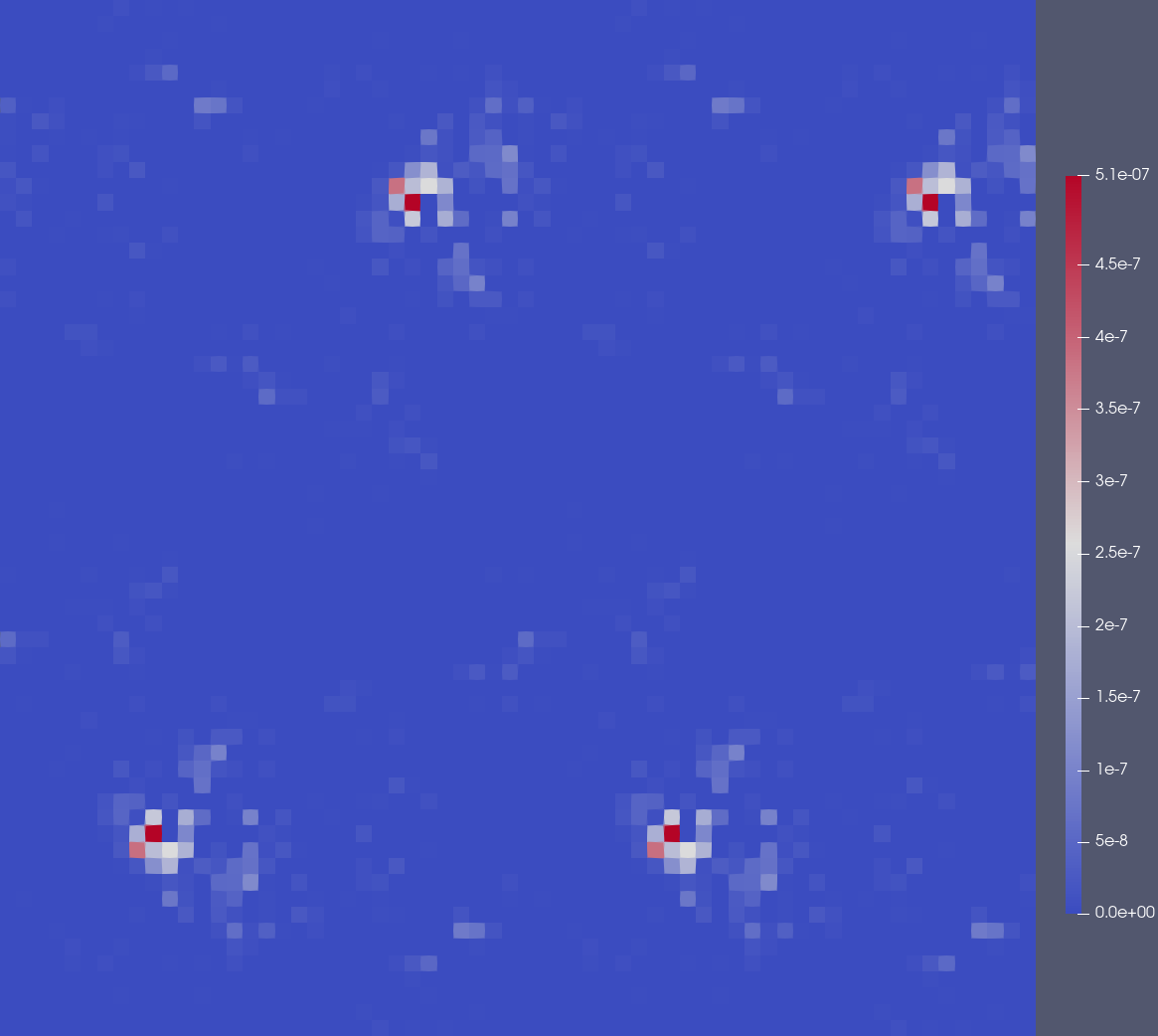} }}%
    \hspace*{-2in}
    \caption{Solution to Kelvin Helmholtz instability using SVV and Laplacian viscosity and SVV coefficient at final time $T = 25.0$, and using $N = 3$, $64 \times 64$ grid.}
    \label{fig:KHI_Coef}%
\end{figure}

\subsection{Riemann problem}
We conclude with a periodic version of a 2D Riemann problem adapted from \cite{kurganov2002solution} to show that the inclusion of SVV does not change the solution when shear stresses and vortices are not featured. The problem is posed on a $[-1, 1]^2$ domain with initial condition 
\begin{align*}
(\rho, u_1, u_2, p) = 
\begin{cases}
    (0.8, 0, 0, 1) & x < 0.5, y < 0.5 \\
    (1, \frac{3}{\sqrt{17}}, 0, 1) & x < 0.5, y > 0.5 \\
    (1, 0, \frac{3}{\sqrt{17}}, 1) & x > 0.5, y < 0.5 \\
    (\frac{17}{32}, 0, 0, 0.4) & x > 0.5, y > 0.5 \hspace{5pt}.
\end{cases} 
\end{align*}
\begin{figure}[H]
    \centering
    \subfloat[\centering Density]{{\includegraphics[width=3.8cm, height=3.8cm]{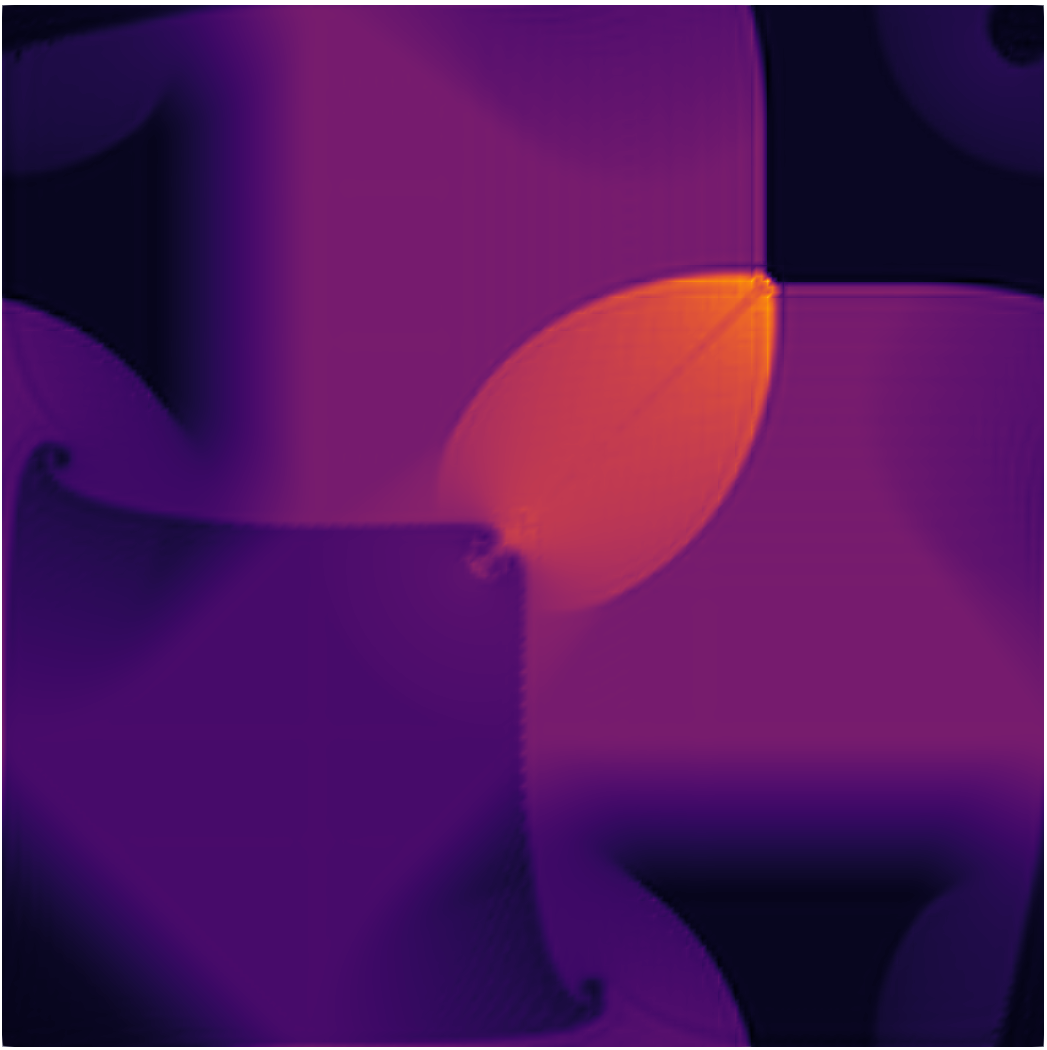} }}
    \subfloat[\centering SVV coefficient $\epsilon_{\Delta \myu_{SVV}}$]{{\includegraphics[width=3.8cm, height=3.8cm]{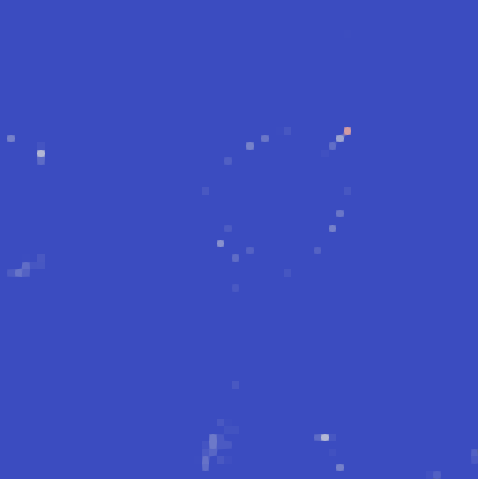} }}%
    \subfloat[\centering Laplacian coefficient $\epsilon_{\Delta \myu}$]{{\includegraphics[height=3.8cm]{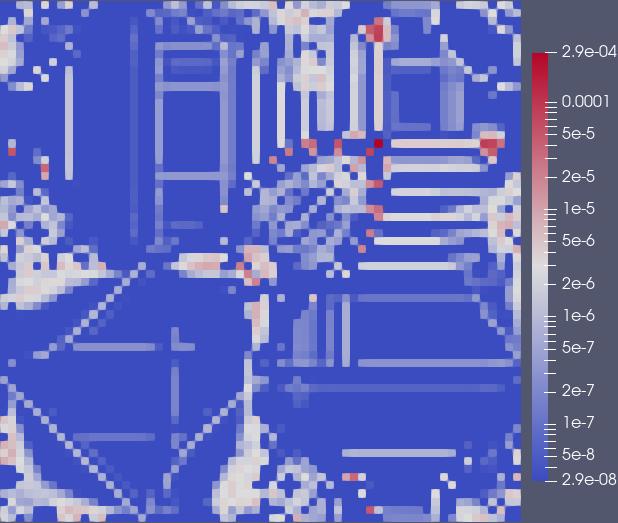} }}%
    \caption{$N=3, 64 \times 64, T = 0.14$, log scale. To highlight how small the SVV coefficient is relative to the Laplacian coefficient, we show the heat map of the log scale of the coefficients.}%
    \label{fig:riemann_problem}%
\end{figure}
Figure \ref{fig:riemann_problem} shows the density profile of 2D Riemann problem using SVV and Laplacian viscosity model and the log profile of the viscosity coefficients of each model. We note that incorporating only the Laplacian model does not change the solution profile. The log profile of the coefficients demonstrates that the SVV coefficient is near zero across the entire domain and is negligible relative to the Laplacian viscosity coefficient.
\section{Conclusions}
In this paper, we extend the ECAV framework introduced in $\cite{chan_av}$ by incorporating different viscosity models in an optimal parameter free fashion. We are able to suppress the presence of large spurious temperature spikes for the receding flow problem and demonstrate that SVV model is able to detect regions of turbulent, high shear stress regions while still enforcing an entropy condition. This procedure allows part of the dissipation required for entropy correction to be offloaded to other dissipative models. Using this framework, users can utilize a suitable viscosity model by deriving a
provably dissipative discretization of the model. The resulting scheme is capable of
targeting different physical phenomena by enforcing entropy stability.
\begin{acknowledgement}
The authors gratefully acknowledge support from National Science Foundation under the awards NSF-GRFP-DGE-2137420.
\end{acknowledgement}
\ethics{Competing Interests}{The authors declare no competing interests.}

\eject

\addcontentsline{toc}{section}{Appendix}

\bibliographystyle{spmpsci} 
\bibliography{biblio} 
\end{document}